\newtheorem{lemma}{Lemma}[section]
\newtheorem{theorem}{Theorem}[section]
\newcommand{\A}{\mathcal{A}}
\newcommand{\la}{\lambda}
\numberwithin{equation}{section}
\newcommand{\beq}[1]{\begin{equation}\label{#1}}
\newcommand{\eeq}{\end{equation}}
\title[New bounds for $(A+A)/(A+A)$]{If $(A+A)/(A+A)$ is small then the ratio set is large}
\author[ O. Roche-Newton]{Oliver Roche-Newton}
\address{O. Roche-Newton: School of Mathematics and Statistics, Wuhan University, Wuhan, Hubei Province, P.R.China. 430072 }
\email{o.rochenewton@gmail.com }
\begin{document}

\begin{abstract}
In this paper, we consider the sum-product problem of obtaining lower bounds for the size of the set
$$\frac{A+A}{A+A}:=\left \{ \frac{a+b}{c+d} : a,b,c,d \in A, c+d \neq 0 \right\},$$
for an arbitrary finite set $A$ of real numbers. The main result is the bound

$$\left| \frac{A+A}{A+A} \right| \gg \frac{|A|^{2+\frac{2}{25}}}{|A:A|^{\frac{1}{25}}\log |A|},$$
where $A:A$ denotes the ratio set of $A$. This improves on a result of Balog and the author \cite{BORN}, provided that the size of the ratio set is subquadratic in $|A|$. That is, we establish that the inequality
$$\left| \frac{A+A}{A+A} \right| \ll |A|^{2}  \Rightarrow |A:A| \gg \frac{ |A|^2}{\log^{25}|A|} .  $$
This extremal result answers a question similar to some conjectures in a recent paper of the author and Zhelezov \cite{ORNZ}.
\end{abstract} 
\maketitle
\section{Introduction}

Given a set $A$, we define its \textit{sum set} to be the set
$$A+A:=\{a+b:a,b \in A\}$$
and its \textit{product set} to be
$$AA:=\{ab:a,b \in A\}.$$
It was conjectured by Erd\H{o}s and Szemer\'{e}di \cite{ES} that, for any finite set of integers $A$, at least one of these two sets has near-quadratic growth. Solymosi \cite{solymosi} used a beautiful and elementary geometric argument to prove that for any finite set $A \subset \mathbb R$,
\begin{equation}
\max \{|A+A|,|AA|\} \gg \frac{|A|^{4/3}}{\log^{1/3}|A|}.
\label{soly}
\end{equation}
Recently, a breakthrough for this problem was achieved by Konyagin and Shkredov \cite{KS}. They adopted and refined the approach of Solymosi, whilst also utilising several other tools from additive combinatorics (such as the Balog-Szemer\'{e}di-Gowers Theorem) in order to prove that
\begin{equation}
\max \{|A+A|,|AA|\} \gg |A|^{\frac{4}{3}+\frac{1}{20598}}.
\label{KS}
\end{equation}
See \cite{KS} and the references contained therein for more background on the sum-product problem.

In this paper, we consider the closely related problem of establishing lower bounds for the set
$$ \frac{A+A}{A+A} := \left \{\frac{a+b}{c+d} : a,b,c,d \in A, c+d \neq 0 \right \}.$$
In \cite{BORN}, it was proven that, if $A$ is a set of strictly positive reals, then
\begin{equation}
\left| \frac{A+A}{A+A} \right| \geq 2|A|^2 -1,
\label{antal}
\end{equation}
thus establishing a sharp sum-product type estimate for this set defined by a combination of additive and multiplicative operations. The sharpness is illustrated by taking $A= \{1,2,\dots,N\}$. By taking $N$ arbitrarily large we see that the exponent $2$ cannot be improved, and in fact, for some smaller examples such as $N=3$, the bound in \eqref{antal} is completely sharp.

A question that remains about this set is that of determining for which sets $A$ the bound \eqref{antal} is sharp, or close to sharp. Note that if $|A+A| \ll |A|$ then it follows that
$$\left| \frac{A+A}{A+A} \right | \ll |A|^2,$$
and so the bound in \eqref{antal} is essentially sharp in such a case. However, we are not aware of other constructions which exhibit the sharpness of  the bound in question, and this motivated the conjecture in \cite{ORNZ} that
\begin{equation}
\left| \frac{A+A}{A+A} \right | \ll |A|^2 \Rightarrow |A+A| \ll |A|.
\label{dmitry}
\end{equation}
Some first weak results towards this conjecture were also proven in \cite{ORNZ}, establishing that if $|\frac{A+A}{A+A}| \ll |A|^2$ then the size of the sum set $A+A$ is sub-quadratic in $|A|$.

In this paper, we revisit the geometric approach of \cite{BORN}, based upon the beautiful work of Solymosi \cite{solymosi}, whilst also utilising the refined approach introduced in \cite{KS}, in order to establish a bound which improves upon \eqref{antal} as long as the ratio set\footnote{The \textit{ratio set} of $A$ is the set $A:A:=\{a/b:a,b \in A, b\neq 0\}$.} is not of its maximum possible size. The main new result is the following:

\begin{theorem} \label{thm:main}
Let $A$ be a finite set of real numbers. Then
\begin{equation}
\left| \frac{A+A}{A+A} \right| \gg \frac{|A|^{2+\frac{2}{25}}}{|A:A|^{\frac{1}{25}}\log |A|}.
\label{main1}
\end{equation}
In particular, it follows that,
\begin{equation}
\left| \frac{A+A}{A+A} \right | \ll |A|^2 \Rightarrow |A:A| \gg \frac{|A|^2}{ \log^{25}|A|}.
\label{main2}
\end{equation}
\end{theorem}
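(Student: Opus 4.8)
The plan is to adapt Solymosi's geometric argument, in the refined form introduced by Konyagin and Shkredov, to the set $D:=\frac{A+A}{A+A}$. First I would set up the point set $P=(A+A)\times(A+A)\subset\R^2$ and observe that a ratio $\frac{a+b}{c+d}$ is precisely the slope of the line through the origin and the point $(c+d,a+b)$; thus $|D|$ bounds the number of distinct slopes (directions) of lines through the origin hitting $P$. Order these directions and, following Solymosi, pair up consecutive directions; on each such pair one gets a parallelogram-type configuration and a sum of the form (points on direction $\ell_i$) $+$ (points on direction $\ell_{i+1}$) lands inside $(A+A+A+A)\times(A+A+A+A)$, or rather inside a Cartesian product of controlled size. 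Summing the Cauchy-Schwarz estimate over all consecutive pairs yields a bound of the shape
\begin{equation}
r_{A+A}(x)\text{-type energy} \ll |D|\cdot |A+A+A+A|^2/(\text{something}),
\label{plan-soly}
\end{equation}
relating $|D|$, the additive energy of $A+A$, and the size of higher sumsets of $A$.

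The second ingredient is to break the symmetry between the "rich" and "poor" directions: rather than using all of $A+A$ uniformly, I would first pass to a large popular subset. Specifically, by a dyadic pigeonholing on the representation function $r_{A+A}(x)=|\{(a,b)\in A\times A: a+b=x\}|$, there is a set $S\subseteq A+A$ and a parameter $\tau$ with $|S|\gtrsim |A|^2/(\tau\log|A|)$ and $r_{A+A}(s)\sim\tau$ for all $s\in S$, and moreover $\sum_{s\in S}r_{A+A}(s)\gtrsim |A|^2/\log|A|$. Running Solymosi's pairing argument on the point set $S\times S$ (or $(A+A)\times S$) lets the multiplicity $\tau$ enter the count: each point of $S$ carries $\sim\tau^2$ representations as $(a+b,a'+b')$, so a single incidence between the line-through-origin configuration and $S\times S$ actually encodes $\sim\tau^2$ quadruples contributing to $\frac{A+A}{A+A}$. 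Carefully bookkeeping this, and using that the sumset appearing on the right is contained in $\frac{(A+A)+(A+A)}{(A+A)+(A+A)}$ which one controls via the Plünnecke-Ruzsa inequality by $|A:A|$, should produce the asymmetric gain reflected in the exponents $2+\frac{2}{25}$ and $\frac{1}{25}$.

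The third step is the translation from the ratio set to the sumset bound: here one uses that $|A:A|$ controls $|(A+A):(A+A)|$ or, more precisely, that the dilates and ratios of $A+A$ that appear in the parallelogram configurations can be covered by few translates of $A:A$ via the Ruzsa covering/triangle inequalities. Combining this with \eqref{plan-soly} and the dyadic decomposition, and optimising the free parameter $\tau$, gives \eqref{main1}; then \eqref{main2} follows immediately by substituting $\left|\frac{A+A}{A+A}\right|\ll|A|^2$ and rearranging.

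The main obstacle I expect is the asymmetric/iterative part of the Konyagin-Shkredov refinement: Solymosi's original argument is clean but only yields the exponent $2$ here (matching \eqref{antal}), and squeezing out the extra $\frac{2}{25}$ requires feeding a non-trivial sum-product-type bound (something like \eqref{soly} or a third-energy estimate for $A$) back into the geometric count, while simultaneously tracking how $|A:A|$ — rather than $|AA|$ — enters, since our configuration is built from ratios of sums rather than plain products. Balancing these two quantities against each other, and ensuring the logarithmic losses from the dyadic pigeonholing stay confined to a single $\log|A|$ (and a $\log^{25}$ in \eqref{main2}), is where the care is needed.
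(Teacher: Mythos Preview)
Your plan has a structural gap: you are working with the wrong point set, and as a consequence $|A:A|$ never legitimately enters the argument. The paper's proof (and the proof of \eqref{antal} in \cite{BORN}) runs the Solymosi picture on $A\times A$, not on $(A+A)\times(A+A)$. The point is that the lines through the origin covering $A\times A$ are indexed by $A:A$, and after a dyadic pigeonhole the typical such line carries $\tau\gg |A|^2/|A:A|$ points; this is exactly how $|A:A|$ appears in the final bound. The \emph{sumset} $(A\times A)+(A\times A)=(A+A)\times(A+A)$ is then the object whose slope set equals $\frac{A+A}{A+A}$. If instead you start from $P=(A+A)\times(A+A)$ and take vector sums, you land in $(2A+2A)\times(2A+2A)$, whose slope set is $\frac{2A+2A}{2A+2A}$ --- a quantity you neither control nor want. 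Your proposed fix, controlling $(A+A):(A+A)$ or $\frac{2A+2A}{2A+2A}$ by $|A:A|$ via Pl\"unnecke--Ruzsa or covering, does not work: Pl\"unnecke--Ruzsa governs iterated \emph{sums}, and there is no inequality bounding ratio sets of sumsets in terms of $|A:A|$ (indeed the whole content of the theorem is that $\frac{A+A}{A+A}$ can be much larger than $|A:A|$).

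You have also misidentified which Konyagin--Shkredov idea is in play. The gain of $\frac{2}{25}$ here does not come from iterating a sum--product bound or from third--moment/BSG machinery; it comes from the \emph{clustering} refinement. One groups the $|S|$ popular slopes in $A:A$ into consecutive bunches of size $M+N$ (with $M\approx\tau^{1/5}$, $N\approx\tau^{1/25}$), fixes one representative point on each of $M$ lines, and sums against all points on the remaining $N$ lines. The naive count gives $\gg MN\tau$ distinct slopes per cluster, but with an overcount $\mathcal E$ that must be shown to be negligible. Controlling $\mathcal E$ is the heart of the proof: it is recast as an incidence problem for a family of algebraic curves and bounded via the Pach--Sharir theorem (Lemma~\ref{thm:PS}), and crucially the representative points must be \emph{chosen} well, which is done through a Tur\'an-type/Lov\'asz Local Lemma argument (Lemma~\ref{EGT}). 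None of this apparatus appears in your outline, and without it the extra $\tau^{1/25}\gg(|A|^2/|A:A|)^{1/25}$ factor cannot be extracted.
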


So, we obtain almost complete information about the size of the ratio set of a set $A$ for which $\left|\frac{A+A}{A+A}\right| \ll |A|^2$. The statement \eqref{main2} is similar in spirit to that of conjecture \eqref{dmitry}. However, note that \eqref{main2} is actually weaker than \eqref{dmitry}; indeed, if \eqref{dmitry} were proven to be true, it would automatically follow that \eqref{main2} is also true, since it is known that any finite set $A \subset \mathbb R$ for which $|A+A| \ll |A|$ satisfies $|A:A| \gg |A|^2$ (see, for example, the work of Li and Shen \cite{LS}).

Throughout the paper, the standard notation
$\ll,\gg$ is applied to positive quantities in the usual way. Saying $X\gg Y$ means that $X\geq cY$, for some absolute constant $c>0$. All logarithms are base $2$.

\section{Sketch of the proof}

The proof of Theorem \ref{thm:main} builds on the simple proof of \eqref{antal} from \cite{BORN} by incorporating a key new idea from \cite{KS}. Let us first briefly recap how the proof of \eqref{antal} goes.

Given a point $p \in (\mathbb R \setminus \{0\}) \times \mathbb R$, we use the notation $R(p)$ for the slope of the line connecting $p$ to the origin, so $R((x,y))=\frac{y}{x}$. For a point set $P$ in the plane, let $R(P)$ be the set of slopes of lines through the origin covering $P$; that is,
$$R(P):=\left\{R(p): p \in P \right \}.$$
Since $(A \times A) + (A \times A)=(A+A) \times (A+A)$, we have $\frac{A+A}{A+A}=R((A\times A)+(A \times A))$. Therefore, we seek to show that the sum set $(A \times A)+(A \times A)$ determines many lines through the origin. 

We first label the slopes needed to cover $A \times A$ in ascending order as $\la_1,\la_2,\dots,\la_{|A:A|}$. Let $l_i$ denote the line through the origin with slope $\la_i$. A simple geometric observation is that, if we fix a point $p_i$ on $l_i \cap (A \times A)$ and consider the set of sums
$$\{p_i+q: q \in (A \times A) \cap l_{i+1}\},$$
then the $|(A \times A) \cap l_{i+1}|$ points in this set all determine different slopes to the origin. Furthermore, these vector sums all lie strictly in between $l_i$ and $l_{i+1}$, which allows us to count elements of $\frac{A+A}{A+A}$ by summing over all $i$ without overcounting. Therefore, summing over all $1\leq i \leq |A:A|-1$, we obtain
$$\left| \frac{A+A}{A+A} \right| \geq \sum_{i=1}^{|A:A|-1} |l_{i+1} \cap (A \times A)|=|A|^2-1,$$
and a small refinement of this argument gives a better multiplicative constant, as in \eqref{antal}.

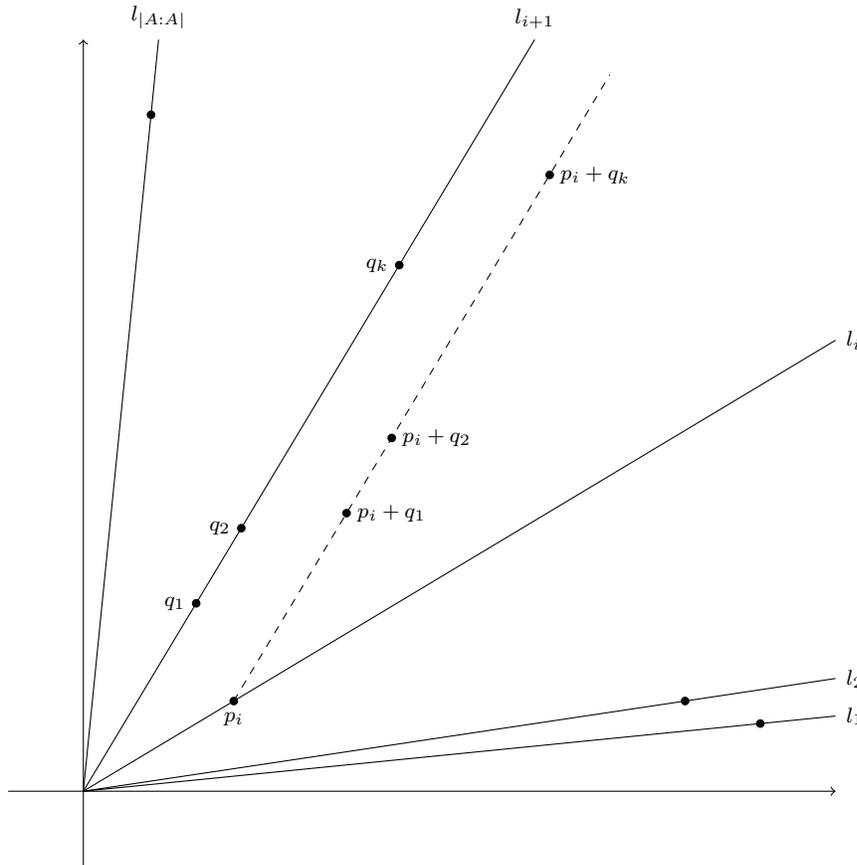
\begin{figure}[H]
\centering
\begin{tikzpicture}[font=\tiny, scale=1]

\draw [->] (0,-1) -- (0,10);
\draw [->] (-1,0) -- (10,0);
\draw (0,0) -- (10,1);
\draw (0,0) -- (10,1.5);
\draw (0,0) -- (10,6);
\draw (0,0) -- (6,10);
\draw (0,0) -- (1,10);
\draw [dashed] (2,1.2) -- (7,9.53333);
\draw[fill] (9,0.9) circle [radius=0.05];
\draw[fill] (8,1.2) circle [radius=0.05];
\draw[fill] (2,1.2) circle [radius=0.05];
\draw[fill] (1.5,2.5) circle [radius=0.05];
\draw[fill] (2.1,3.5) circle [radius=0.05];
\draw[fill] (4.2,7) circle [radius=0.05];
\draw[fill] (0.9,9) circle [radius=0.05];
\draw[fill] (3.5,3.7) circle [radius=0.05];
\draw[fill] (4.1,4.7) circle [radius=0.05];
\draw[fill] (6.2,8.2) circle [radius=0.05];
\node[right] at (10,1) {$l_1$};
\node[right] at (10,1.5) {$l_2$};
\node[right] at (10,6) {$l_{i}$};
\node[above] at (6,10) {$l_{i+1}$};
\node[above] at (1,10) {$l_{|A:A|}$};
\node[below] at (2,1.2) {$p_i$};
\node[left] at (1.5,2.5) {$q_1$};
\node[left] at (2.1,3.5) {$q_2$};
\node[left] at (4.2,7) {$q_{k}$};
\node[right] at (3.5,3.7) {$p_i+q_1$};
\node[right] at (4.1,4.7) {$p_i+q_2$};
\node[right] at (6.2,8.2) {$p_i+q_k$};
\end{tikzpicture}

\caption{Illustration of the key idea in the proof of inequality \eqref{antal}. In this case, $k=|(A \times A) \cap l_{i+1}|$, and we observe that $R(p_i+q_1)<R(p_i+q_2)<\dots<R(p_i+q_k)$.}

\centering

\end{figure}

We obtain an improvement on this argument by considering sums along more than just neighbouring lines. Suppose for simplicity that all of our lines through the origin support exactly $\tau$ points from $A \times A$. For some parameter $M$ (which we'd like to be as large as possible), we split the lines through the origin into consecutive clusters, each consisting of $M$ lines. We obtain at least
\begin{equation}
\tau {M \choose 2} - \mathcal E
\label{sketchy}
\end{equation}
distinct elements of $\frac{A+A}{A+A}$ coming from pairs of lines in a given cluster. The first term comes from repeating the previous argument for each of the ${M \choose 2}$ pairs of lines in the cluster, whilst the error term $\mathcal E$ is inserted to account for points which were overcounted in the first term. The next task is to show that, for sufficiently small $M$, this error term is insignificant compared to the main term. To bound the error term, we restate this quantity in terms of incidence geometry and utilise a variation on the Szemer\'{e}di-Trotter Theorem due to Pach and Sharir \cite{PS}.

Unfortunately, some extra work is needed to make this part of the argument work. In the proof of \eqref{antal}, the choice of the point $p_i$ to sum out from on each line can be made completely arbitrarily. However, after we convert the task of upper bounding $\mathcal E$ into an incidence problem, we see that the number of incidences could potentially be too large, depending on how we make the initial choice of a point from each line. Tools from extremal graph theory are utilised in order to show that it is possible to choose the points $p_i$ in such a way as to keep the error term small enough for the purposes of the argument.

There is another significant obstacle which this sketch has not yet mentioned. Roughly speaking, there are a comparatively small number of terms in the definition of $\mathcal E$ which we cannot deal with by the above methods. Therefore, we need to perform a non-trivial pigeonholing argument at the outset of the proof in order to find a large subset of the points in the plane which allow us to control these problematic terms. In order to do this, we set up another incidence problem, where the points in the configuration space become curves in the incidence space, and we choose a subset of the point set in the configuration space corresponding to poor curves in the incidence picture. In order to make this work, we have to modify the aforementioned clustering argument, instead building clusters of size $M+N$, where $N$ is considerably smaller than $M$.

\section{Preliminary results}
As stated in the sketch proof, a key tool in this paper is incidence theory. We will need the following result, which is a special case of a more general result of Pach and Sharir \cite{PS}. The result gives a generalisation of the Szemer\'{e}di-Trotter Theorem to certain well-behaved families of curves.
\begin{lemma} \label{thm:PS}
Let $\mathcal L$ be a family of curves and let $ \mathcal P$ be a set of points in the plane such that 
\begin{enumerate}
\item any two distinct curves from $\mathcal L$ intersect in at most two points of $\mathcal P$
\item for any two distinct points $p,q \in \mathcal P$, there exist at most two curves from $\mathcal L$ which pass through both $p$ and $q$.
\end{enumerate}
Then, for any $k \geq 2$, the set $\mathcal L_k$ of $k$ rich curves\footnote{To be precise $\mathcal L_k:=\{l \in L : |\{p \in \mathcal P : p \in l\}|\geq k \}$.} satisfies the bound
$$|\mathcal L_k| \ll \frac{|\mathcal P|^2}{k^3}+\frac{|\mathcal P|}{k}.$$
\end{lemma}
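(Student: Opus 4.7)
The plan is to derive this bound in the same way one proves the classical Szemer\'edi--Trotter theorem, namely by combining the crossing number inequality with a graph built along the curves of $\mathcal{L}$. The $k$-rich statement is then obtained from the underlying point-curve incidence bound by a standard dyadic/rearrangement argument. So the first goal is to establish
\[
I(\mathcal{P},\mathcal{L}) \ll |\mathcal{P}|^{2/3}|\mathcal{L}|^{2/3} + |\mathcal{P}| + |\mathcal{L}|,
\]
where $I(\mathcal{P},\mathcal{L})$ denotes the number of incidences, under the two combinatorial hypotheses given.

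To prove this incidence bound I would construct a graph $G$ on vertex set $\mathcal{P}$, whose edges arise by traversing each curve $\ell \in \mathcal{L}$ and joining consecutive points of $\mathcal{P}\cap\ell$. A curve containing $k_\ell \geq 1$ incidences contributes $k_\ell - 1$ edges, so the total edge count is at least $I(\mathcal{P},\mathcal{L}) - |\mathcal{L}|$. Hypothesis (2) guarantees that any pair of points in $\mathcal{P}$ is used as an edge by at most two curves, so the underlying simple graph still has $\Omega(I(\mathcal{P},\mathcal{L}) - |\mathcal{L}|)$ edges. Hypothesis (1) bounds the crossing number: two edges from different curves can only meet where the curves themselves meet, and there are at most two such intersection points per pair, giving $\mathrm{cr}(G) \leq 2\binom{|\mathcal{L}|}{2}$. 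Applying the crossing lemma $\mathrm{cr}(G) \gg e(G)^3/|\mathcal{P}|^2$ (valid once $e(G) \geq 4|\mathcal{P}|$) and rearranging yields the main regime of the incidence bound; when the hypothesis of the crossing lemma fails, the trivial bound $I(\mathcal{P},\mathcal{L}) \ll |\mathcal{P}| + |\mathcal{L}|$ already suffices.

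The $k$-rich version is now a clean consequence. Restricting to $\mathcal{L}_k$, each curve contributes at least $k$ incidences, so $k|\mathcal{L}_k| \leq I(\mathcal{P},\mathcal{L}_k)$, and therefore
\[
k|\mathcal{L}_k| \ll |\mathcal{P}|^{2/3}|\mathcal{L}_k|^{2/3} + |\mathcal{P}| + |\mathcal{L}_k|.
\]
Since $k\geq 2$, the last term can be absorbed into the left-hand side (adjusting absolute constants), after which splitting into the two remaining cases gives $|\mathcal{L}_k| \ll |\mathcal{P}|^2/k^3$ or $|\mathcal{L}_k| \ll |\mathcal{P}|/k$, which is exactly the claimed estimate.

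The main technical obstacle I anticipate is making the graph construction rigorous when the curves in $\mathcal{L}$ are not graphs of functions or are not globally simple: one has to ensure that the notion of \emph{consecutive} incidence points along a curve is well defined, and that the resulting edges cross only where the underlying curves do. For any reasonable family of curves satisfying (1) and (2), such as algebraic curves of uniformly bounded degree, this can be arranged by passing to connected arcs of the curves, which is the standard device used in the Pach--Sharir setup being invoked.
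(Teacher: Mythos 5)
The paper does not prove Lemma \ref{thm:PS}; it is stated as a special case of a theorem of Pach and Sharir \cite{PS} and cited as a black box. So there is no in-paper argument to compare against. Your crossing-number sketch is essentially Sz\'ekely's proof of Szemer\'edi--Trotter, which is also the engine behind the Pach--Sharir theorem, so the route is the right one and in spirit matches the cited source.

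There is, however, a genuine gap in the step bounding the crossing number. You write that hypothesis (1) gives ``at most two such intersection points per pair'' of curves and conclude $\mathrm{cr}(G) \le 2\binom{|\mathcal{L}|}{2}$. But hypothesis (1) as stated only controls intersections of two curves \emph{at points of $\mathcal{P}$}; it says nothing about how many times the two curves meet away from $\mathcal{P}$, and each such off-$\mathcal{P}$ meeting point can be a crossing of two edges in your drawing. In other words, (1) and (2) together are a ``degrees of freedom'' hypothesis, but the crossing-number argument additionally needs a bounded ``multiplicity type'' (a uniform bound on the \emph{total} number of pairwise intersections of the curves), which is a separate assumption in Pach--Sharir. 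In the actual application here the curves are conics, so B\'ezout gives at most four pairwise intersections and the argument goes through; but that is extra structure not contained in the lemma's two stated hypotheses. Your closing paragraph gestures toward ``algebraic curves of uniformly bounded degree,'' which is the correct fix, but frames the difficulty as one of defining consecutive arcs rather than as the missing bound on total pairwise intersections that the crossing estimate requires. If you add the bounded-degree (or bounded multiplicity type) assumption explicitly, the rest of the argument --- the multigraph with $\ge I-|\mathcal{L}|$ edges, edge multiplicity at most two from (2), the crossing lemma, and the passage from the incidence bound to the $k$-rich bound via $k|\mathcal{L}_k|\le I(\mathcal{P},\mathcal{L}_k)$ --- is correct.
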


We will need a result from extremal graph theory similar to Tur\'{a}n's Theorem, which states that sufficiently dense graphs always contain large cliques. First, we introduce some basic definitions.

Given an undirected\footnote{Whenever we speak of a graph in this paper, it is assumed to be undirected.} graph $G=(V,E)$ with vertex set $V$ and edge set $E$, an induced subgraph of $G$ is a graph formed by taking a subset $V' \subset V$ and deleting all edges which contain one or more vertices from $V \setminus V'$. For two subsets $V_1,V_2 \subset V$, the notation $E(V_1,V_2)$ is used for the set of edges connecting $V_1$ and $V_2$; that is
$$E(V_1,V_2):=\{\{v_1,v_2\} \in E:v_1 \in V_1, v_2 \in V_2\}.$$
The notation $K_r$ is used for the complete graph on $r$ vertices. A set of verices which form a complete induced subgraph is said to be a clique.

Given a graph $G=(V,E)$, a subset of the vertex set $V' \subset V$ is said to be an \textit{independent set} if there are no edges connecting two elements of $V'$. A graph $G=(V,E)$ is said to be \textit{$r$-partite} if it is possible to write $V$ as a disjoint union $V=V_1 \cup V_2\cup\dots \cup V_r$ with each of the $V_i$ being independent sets.

We will need the following version of the Lov\'{a}sz Local Lemma. This precise statement is Corollary 5.1.2 in \cite{AS}.

\begin{theorem} \label{LLL} [Lov\'{a}sz Local Lemma]
Let $A_1,A_2,\dots,A_n$ be events in an arbitrary probability space. Suppose that each event $A_i$ is mutually independent from all but at most $d$ of the events $A_j$ with $j\neq i$. Suppose also that the probability of the event $A_i$ occuring is at most $p$, for all $1\leq i \leq n$. Finally, suppose that
$$ep(d+1) \leq 1 .$$
Then, with positive probability, none of the events $A_1,\dots,A_n$ occur.\footnote{Using standard notation from probability theory, the conclusion is that $\Pr(\bar{A_1} \land \bar{A_2} \land \cdots \land \bar{A_n})>0$.}
\end{theorem}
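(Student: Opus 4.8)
The plan is to prove this via the standard two-step route: first establish the general (asymmetric) Lov\'asz Local Lemma, and then specialise to the symmetric form stated in Theorem \ref{LLL}. The asymmetric statement I would prove is: \emph{if $A_1,\dots,A_n$ are events, if for each $i$ there is a set $S_i \subseteq \{1,\dots,n\}\setminus\{i\}$ such that $A_i$ is mutually independent of the family $\{A_j : j \notin S_i \cup \{i\}\}$, and if there exist reals $x_1,\dots,x_n \in [0,1)$ with $\Pr(A_i) \le x_i \prod_{j \in S_i}(1-x_j)$ for all $i$, then $\Pr\bigl(\bigwedge_{i=1}^n \bar A_i\bigr) \ge \prod_{i=1}^n (1-x_i) > 0$.} The statement of Theorem \ref{LLL} then follows quickly. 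We may assume $d \ge 1$, since if $d=0$ the events are mutually independent and $p \le 1/e < 1$ (from $ep(d+1)\le 1$), so $\Pr\bigl(\bigwedge_i \bar A_i\bigr) = \prod_i (1-\Pr(A_i)) \ge (1-p)^n > 0$. For $d \ge 1$ we take $x_i = \tfrac{1}{d+1} \in (0,1)$ for every $i$, so that, since $|S_i|\le d$,
$$x_i \prod_{j \in S_i}(1-x_j) \ge \frac{1}{d+1}\Bigl(1-\frac{1}{d+1}\Bigr)^{d} = \frac{1}{d+1}\cdot\frac{1}{(1+1/d)^d} > \frac{1}{e(d+1)} \ge p \ge \Pr(A_i),$$
using the elementary inequality $(1+1/d)^d < e$ and the hypothesis $ep(d+1) \le 1$. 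Thus the hypotheses of the asymmetric form hold and we conclude $\Pr\bigl(\bigwedge_i \bar A_i\bigr) \ge \bigl(\tfrac{d}{d+1}\bigr)^n > 0$.

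The heart of the matter is the proof of the asymmetric form, which goes by induction on $s = |S|$ to prove the claim that $\Pr\bigl(A_i \mid \bigwedge_{j \in S}\bar A_j\bigr) \le x_i$ for every index $i \notin S$ (one checks along the way, by the same induction, that the conditioning events have positive probability, so that the conditional probabilities are well defined). The base case $s = 0$ is just $\Pr(A_i) \le x_i \prod_{j \in S_i}(1-x_j) \le x_i$. For the inductive step I would split $S = S' \cup S''$ with $S' = S \cap S_i$ and $S'' = S \setminus S_i$, and write
$$\Pr\Bigl(A_i \,\Big|\, \bigwedge_{j \in S}\bar A_j\Bigr) = \frac{\Pr\bigl(A_i \wedge \bigwedge_{j \in S'}\bar A_j \mid \bigwedge_{l \in S''}\bar A_l\bigr)}{\Pr\bigl(\bigwedge_{j \in S'}\bar A_j \mid \bigwedge_{l \in S''}\bar A_l\bigr)}.$$
The numerator is at most $\Pr\bigl(A_i \mid \bigwedge_{l \in S''}\bar A_l\bigr) = \Pr(A_i) \le x_i \prod_{j \in S_i}(1-x_j)$, where the equality uses that $A_i$ is mutually independent of the family indexed by $S'' \subseteq \{1,\dots,n\}\setminus(S_i\cup\{i\})$. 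For the denominator, enumerate $S' = \{j_1,\dots,j_r\}$ and apply the chain rule; each factor $\Pr\bigl(\bar A_{j_k} \mid \bigwedge_{m<k}\bar A_{j_m} \wedge \bigwedge_{l \in S''}\bar A_l\bigr)$ is at least $1 - x_{j_k}$ by the induction hypothesis, since the conditioning index set is contained in $S \setminus \{j_k\}$ and so has size at most $s-1$. Hence the denominator is at least $\prod_{k=1}^r (1-x_{j_k}) = \prod_{j \in S'}(1-x_j) \ge \prod_{j \in S_i}(1-x_j)$, and dividing yields the bound $x_i$, completing the induction. Finally, $\Pr\bigl(\bigwedge_{i=1}^n \bar A_i\bigr) = \prod_{i=1}^n \Pr\bigl(\bar A_i \mid \bigwedge_{j<i}\bar A_j\bigr) \ge \prod_{i=1}^n (1-x_i)$.

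I expect the inductive step to be the part requiring the most care. One must choose the correct split of the conditioning set $S$ into the portion $S'$ that "interacts" with $A_i$ and the portion $S''$ that does not, and then invoke mutual independence \emph{from an entire family of events} (not merely pairwise independence) to evaluate the numerator, while simultaneously arranging the chain rule so that the induction hypothesis applies verbatim to each denominator factor. The only other delicate point is the bookkeeping that guarantees every conditioning event $\bigwedge_{j \in S}\bar A_j$ has positive probability, which is folded into the same induction via the observation that $\Pr\bigl(\bar A_i \mid \bigwedge_{j \in S}\bar A_j\bigr) \ge 1 - x_i > 0$; the specialisation step is then just the computation $(1+1/d)^d < e$ displayed above.
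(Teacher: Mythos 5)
Your proof is correct; note, however, that the paper does not prove Theorem \ref{LLL} at all --- it simply quotes it as Corollary 5.1.2 of Alon--Spencer \cite{AS}. Your argument (the general Lov\'asz Local Lemma via induction on the size of the conditioning set, then the specialisation $x_i=\tfrac{1}{d+1}$ together with $(1+1/d)^d<e$) is exactly the standard proof given in that reference, and all the delicate points --- the split $S=S'\cup S''$, the use of mutual independence from the whole family indexed by $S''$, and the positivity of the conditioning events --- are handled correctly.
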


This will now be used to derive the following variant of Tur\'{a}n's Theorem. The proof, due to a private communication with Noga Alon, is similar to that of Proposition 2.4 in \cite{alon}, or Proposition 5.3.3 in \cite{AS}. The lemma plays an important role in the proof of the main theorem of this paper.\footnote{In an earlier draft of this paper, a weaker version of Lemma \ref{EGT} was proven via an application of Tur\'{a}n's Theorem. The improvement to the present version is evident in the relaxation of the condition \eqref{condition}. This improved version of Lemma \ref{EGT} has resulted in a small quantitative improvement to the main result of this paper, in comparison to the earlier draft. I am very grateful to Noga Alon for his generous help.}


\begin{lemma} \label{EGT}
Let $G$ be an $r$-partite graph on $rk$ vertices consisting of $r$ independent sets $V_1,V_2,\dots,V_r$ of vertices, each with cardinality $k$. Suppose that, for each $1 \leq i<j \leq r$ we have
\begin{equation}
|E(V_i,V_j)| > k^2\left(1-\frac{1}{e(2r-3)}\right).
\label{condition}
\end{equation}
Then $G$ contains $K_r$ as an induced subgraph.
\end{lemma}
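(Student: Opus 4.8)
The plan is to use the Lov\'asz Local Lemma (Theorem \ref{LLL}) in the contrapositive direction: assume $G$ contains no induced copy of $K_r$, pick one vertex uniformly at random from each part $V_i$, and derive a contradiction by showing that with positive probability the $r$ chosen vertices are pairwise adjacent — which would be precisely an induced $K_r$. For each pair $1 \leq i < j \leq r$, let $A_{ij}$ be the ``bad event'' that the vertex chosen from $V_i$ is \emph{not} adjacent to the vertex chosen from $V_j$. If with positive probability none of the $A_{ij}$ occur, then there is a choice of one vertex per part forming a clique, hence an induced $K_r$ (the parts being independent sets means the induced subgraph on these $r$ vertices has exactly the edges among chosen vertices), contradicting our assumption.

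First I would bound the probability of each bad event. Since the choices in distinct parts are independent and uniform, $\Pr(A_{ij}) = \frac{k^2 - |E(V_i,V_j)|}{k^2} < \frac{1}{e(2r-3)}$ by hypothesis \eqref{condition}; so we may take $p = \frac{1}{e(2r-3)}$. Next I would identify the dependency degree $d$. The event $A_{ij}$ depends only on the random choices in parts $V_i$ and $V_j$; two events $A_{ij}$ and $A_{i'j'}$ are mutually independent unless the index pairs $\{i,j\}$ and $\{i',j'\}$ share a common element (since then they involve a common part's random choice, otherwise they involve disjoint collections of independent random variables). For a fixed pair $\{i,j\}$, the number of other pairs $\{i',j'\}$ meeting it in at least one index is $2(r-2)$: there are $r-2$ pairs containing $i$ (other than $\{i,j\}$) and $r-2$ pairs containing $j$. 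Hence $d = 2(r-2) = 2r-4$.

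It then remains to check the Local Lemma hypothesis $ep(d+1) \leq 1$. With the values above, $ep(d+1) = e \cdot \frac{1}{e(2r-3)} \cdot (2r-4+1) = \frac{2r-3}{2r-3} = 1 \leq 1$, so the condition is satisfied (with equality). Theorem \ref{LLL} then yields $\Pr(\overline{A_{ij}} \text{ for all } i<j) > 0$, giving the desired clique and completing the proof. I do not anticipate a serious obstacle here; the only points requiring care are the precise counting of the dependency degree $d = 2r-4$ (making sure ``mutual independence'' is correctly invoked for events living on disjoint blocks of independent coordinates) and checking that the arithmetic in $ep(d+1)\le 1$ comes out exactly right, which is evidently what dictated the constant $e(2r-3)$ in the statement of \eqref{condition}.
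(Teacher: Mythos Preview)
Your proposal is correct and matches the paper's proof essentially line for line: random selection of one vertex per part, bad events $A_{ij}$ with probability bounded via \eqref{condition}, dependency degree $d=2r-4$, and the Lov\'asz Local Lemma with $ep(d+1)=1$. The only cosmetic difference is that you phrase it as a contradiction while the paper argues directly, but the content is identical.
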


\begin{proof}
Let $K=\{v_1,v_2,\dots,v_r\}$ be a random set of $r$ vertices, where each $v_i \in V_i$ is chosen uniformly and independently among all vertices in $V_i$. The aim is to show that, with positive probability, $K$ forms a clique. It then follows immediately that $G$ contains $K_r$ as an induced subgraph.

For each $1 \leq i<j \leq r$, let $E_{ij}$ be the event that $v_i$ and $v_j$ are not connected. It follows from condition \eqref{condition} that $\Pr(E_{ij}) \leq \frac{1}{e(2r-3)}$. Furthermore, the events $E_{ij}$ and $E_{i'j'}$ are mutually independent, provided that $\{i,j\} \cap\{i',j'\} = \emptyset$. This means that the event $E_{ij}$ is mutually independent from all but at most $2r-4$ of the other events $E_{i'j'}$.

We now apply Theorem \ref{LLL} with $d=2r-4$ and $p=\frac{1}{e(2r-3)}$. Note that
$$ep(d+1)=e\frac{1}{e(2r-3)}(2r-3)\leq 1.$$
Therefore, with positive probability, none of the events $E_{ij}$ occur. That is, with positive probability, the random set of vertices $K$ forms a clique.
\end{proof}

The following example shows that the lemma does not hold if \eqref{condition} is replaced by the bound
$$E(V_i,V_j) \geq k^2\left(1-\frac{1}{r-1}\right).$$
In other words, Lemma \ref{EGT} is tight up to determining the correct value of the multiplicative constant in the denominator. Indeed, let $G$ be the complete $r$-partite graph with $k$ vertices in each of the independent sets $V_1,\dots,V_r$. For simplicity, let us assume that $r-1$ divides $k$. Write $V_1=\{v_1,\dots,v_k\}$ and delete all of the edges between $v_1,\dots,v_{\frac{k}{r-1}}$ and the vertices of $V_2$, removing a total of $\frac{k^2}{r-1}$ edges between $V_1$ and $V_2$. Then, delete all of the edges between $v_{\frac{k}{r-1}+1},\dots,v_{2\frac{k}{r-1}}$ and $V_3$, again removing a total of $\frac{k^2}{r-1}$ edges between $V_1$ and $V_3$.

Repeat this process for each of the independent sets $V_4,\dots,V_r$ and label the resulting graph $G'$. We have at least
$$k^2-\frac{k^2}{r-1}=k^2\left(1-\frac{1}{r-1}\right)$$
edges between each of the independent sets of vertices of $G'$. However, the graph does not contain $K_r$ as an induced subgraph, since any such subgraph must contain a vertex from $V_1$, and this vertex has no edges between it and one of the other independent sets $V_2,\dots , V_r$.

It may be an interesting question in its own right to determine the correct multiplicative constants for a completely optimal version of Lemma \ref{EGT}.

\section{Proof of Theorem \ref{thm:main}}

Consider the point set $A\times A$ in the plane. Without loss of generality, we may assume that $A$ consists of strictly positive reals, and so this point set lies exclusively in the positive quadrant. The aim is to eventually show that $R((A \times A)+(A\times A))$ is large. For $\la \in A:A$, let $\mathcal A_{\la}$ denote the set of points on the line through the origin with slope $\la$ and let $A_{\la}$ denote the projection of this set onto the horizontal axis. That is,
$$\mathcal A_{\la}:=\{(x,y) \in A \times A:y=\la x\},\,\,\,\,\,\,\,\,A_{\la}:=\{x:(x,y) \in \mathcal A_{\la}\}.$$
Note that $|\mathcal A_{\la}|=|A_{\la}|$ and
$$\sum_{\la \in A:A} |A_{\la}|=|A|^2.$$

We begin by dyadically decomposing this sum and applying the pigeonhole principle in order to find a large subset of $A\times A$ consisting of points which lie on lines of similar richness. Note that
$$\sum_{\la:|A_{\la}| \leq \frac{|A|^2}{2|A:A|}} |A_{\la}| \leq \frac{|A|^2}{2},$$ 
and so
$$\sum_{\la:|A_{\la}| \geq \frac{|A|^2}{2|A:A|}} |A_{\la}| \geq \frac{|A|^2}{2}.$$
Dyadically decompose the sum to get
$$\sum_{j \geq 1}^{\lceil \log |A| \rceil} \sum_{\la: 2^{j-1}\frac{|A|^2}{2|A:A|} \leq |A_{\la}| < 2^j\frac{|A|^2}{2|A:A|}}|A_{\la}| \geq \frac{|A|^2}{2}.$$
Therefore, there exists some $\tau \geq \frac{|A|^2}{2|A:A|}$ such that
\begin{equation}
\tau|S_{\tau}| \gg \sum_{\la \in S_{\tau}} |A_{\la}| \gg \frac{|A|^2}{\log|A|},
\label{Sbound}
\end{equation}
where $S_{\tau}:=\{ \la : \tau \leq |A_{\la}| <2\tau \}$. Since $\tau$ is fixed throughout the proof, we simply write $S=S_{\tau}$.




Let $M$ and $N$ be integer parameters. We define
$$M=\lfloor c_M \tau^{1/5} \rfloor \,\,\,\,\,\text{and} \,\,\,\,\ N=\lfloor c_N \tau^{1/25} \rfloor,$$
where $c_M$ and $c_N$ are sufficiently small absolute constants. The constant $c_M$ depends on other absolute constants, while $c_N$ depends on $c_M$ along with other absolute constants. At various points in the proof, we will need to ensure that the choices of $c_M$ and $c_N$ are made in such a way as to ensure that certain bounds hold. We will keep a record of these requirements, and devote some time at the conclusion of the proof to ensuring that suitable choices of $c_M$ and $c_N$ are indeed possible to make. The first requirement is that
\begin{equation}
1 \leq N \leq M \leq \frac{|S|}{2}.
\label{req1}
\end{equation}

Following the approach of Konyagin and Shkredov \cite{KS} with a small adaptation, we split the slopes of our lines through the origin into clusters of size $M+N \leq 2M$. To be precise, if we list the elements of $S$ in increasing order, the first cluster $S_1$ consists of the first $M+N$ elements in this list, and so on for the j'th cluster $S_j$. We repeat this process until there are strictly less than $M+N$ slopes unaccounted for. Note that there are exactly $\left \lfloor \frac{|S|}{M+N} \right \rfloor$ clusters.

Now, let $S_j$ be an arbitrary cluster and let $\la_{min}$ and $\la_{max}$ be its smallest and largest elements respectively. Following the observations from papers \cite{solymosi} and \cite{BORN}, note that
$$R_j:=\{R(p+q): p,q \in \bigcup_{\la \in S_j}\mathcal A_{\la}\} \subset [\la_{min},\la_{max}].$$
Therefore, for any $1 \leq j,j' \leq \lfloor \frac{|S|}{M+N} \rfloor$ with $j \neq j'$, the sets $R_j$ and $R_{j'}$ are disjoint, and we can sum over the clusters without overcounting. That is, we know that
\begin{equation}
\left| \frac{A+A}{A+A} \right| \geq \left| \bigcup_j R_j \right| = \sum_j |R_j|.
\label{plan}
\end{equation}
Given this information, we now seek to show that $|R_j|$ is large for all $j$.

We arbitrarily divide the $M+N$ elements of $S_j$ into two distinct sets $T_j$ and $U_j$,  such that $|T_j|=M$ and $|U_j|=N$. Although this decomposition is completely arbitrary, let us take $T_j$ to be the $M$ smallest elements of $S_j$, and let $U_j$ be the remaining $N$ elements.

For technical reasons which will become clearer later in the proof, we need to find a large subset $P_j \subset \cup_{\la \in T_j} \mathcal A_{\la}$ such that for all $(a,\la a) \in P_j$ and every $(\la_{1},\la_{2}) \in U_j \times U_j$ with $\la_{1} \neq \la_{2}$,
\begin{equation}
|\{(x,y) \in A_{\la_1} \times A_{\la_2} : R((a,\la a)+(x,\la_1 x))=R((a,\la a)+(y,\la_2 y))\}| \leq \tau^{1-\frac{1}{25}}.
\label{technical}
\end{equation}

To find this subset we make use of some incidence theory. First, take an arbitrary pair $(\la_1,\la_2) \in U_j \times U_j$ with $\la_1 \neq \la_2$. For each $(a,\la a) \in \cup _{\la \in T_j} \mathcal A_{\la}$, define $l_{(a, \la a)}$ to be the curve
$$l_{(a,\la a)}:=\{(x,y) \in \mathbb R^2: (\la a+ \la_1x)(a+y)=(\la a+\la_2y)(a+x) \},$$
and let $\mathcal L$ be the set of curves
$$\mathcal L:=\{l_{(a,\la a)}: (a,\la a) \in \cup_{\la \in T_j} \mathcal A_{\la}\}.$$
Define $\mathcal P=A_{\la_1} \times A_{\la_2}$ and note that 
$$|l_{(a,\la a)} \cap P|=|\{(x,y) \in A_{\la_1} \times A_{\la_2}:R((a,\la a)+(x,\la_1 x))=R((a,\la a)+(y,\la_2 y))\}|.$$

We can check that this family of curves satisfy the conditions of Lemma \ref{thm:PS}. This check involves long and unenlightening calculations and so it is relegated to an appendix in an effort to avoid disrupting the flow of the proof. Therefore, if we define $\mathcal L_k$ to be the set of curves supporting at least $k$ points from $\mathcal P$, Lemma \ref{thm:PS} tells us that
$$|\mathcal L_k| \ll \frac{|\mathcal P|^2}{k^3}+\frac{|\mathcal P|}{k}.$$
Setting $k=\tau^{1-\frac{1}{25}}$ and recalling that $|\mathcal P|=|A_{\la_1}||A_{\la_2}| \ll \tau^2$, we have
$$|\mathcal L_k| \ll \tau^{1+\frac{3}{25}}.$$
That is, there are at most $C\tau^{1+\frac{3}{25}}$ elements $(a,\la a) \in \cup_{\la \in T_j} \mathcal A_{\la}$ such that
$$|\{(x,y) \in A_{\la_1} \times A_{\la_2} : R((a,\la a)+(x,\la_1 x))=R((a,\la a)+(y,\la_2 y))\}| \geq \tau^{1-\frac{1}{25}},$$
where $C$ is an absolute constant. 

We then repeat this process for each of the ${N \choose 2} \leq N^2$ possible choices for a distinct pair $(\la_1,\la_2) \in U_j \times U_j$. There are at most $CN^2 \tau^{1+\frac{3}{25}}$ elements $(a,\la a) \in \cup_{\la \in T_j} \mathcal A_{\la}$ such that
$$|\{(x,y) \in A_{\la_1} \times A_{\la_2} : R((a,\la a)+(x,\la_1 x))=R((a,\la a)+(y,\la_2 y))\}| \geq \tau^{1-\frac{1}{25}}$$
for some $(\la_1,\la_2) \in U_j \times U_j$ with $\la_1 \neq \la_2$. All of the other elements $(a,\la a) \in \cup_{\la \in T_j} \mathcal A_{\la}$ have the property that, for all $(\la_1,\la_2) \in U_j \times U_j$ with $\la_1 \neq \la_2$
\begin{equation}
|\{(x,y) \in A_{\la_1} \times A_{\la_2} : R((a,\la a)+(x,\la_1 x))=R((a,\la a)+(y,\la_2 y))\}| \leq \tau^{1-\frac{1}{25}}.
\label{wanted}
\end{equation}

Define $P_j \subset \cup_{\la \in T_j} \mathcal A_{\la}$ to be the set of elements of  $\cup_{\la \in T_j} \mathcal A_{\la}$ such that \eqref{wanted} holds for all $\la_1,\la_2 \in U_j$ with $\la_1 \neq \la_2$. The previous argument tells us that
\begin{equation}
|P_j| \geq \tau M - CN^2\tau^{1+\frac{3}{25}}.
\label{Pjbound}
\end{equation}
By choosing $c_N$ sufficiently small, it follows that
\begin{equation}
|P_j| \geq \frac{\tau M}{2}.
\label{Pjbound2}
\end{equation}
Indeed, we will check later that it is possible to choose $c_N$ and $c_M$ so that
\begin{equation}
c_N \leq \left(\frac{c_M}{4C}\right)^{1/2},
\label{req2}
\end{equation}
thus ensuring that \eqref{Pjbound2} holds.

We now have a set $P_j$ of at least $\frac{\tau M}{2}$ points. It will be computationally convenient for all of these points to lie on lines having exactly the same richness, and so yet another process of pigeonholing is needed.

Let $T_j' \subset T_j$ be the set
$$T_j':=\left\{\la \in T_j: |\mathcal A_{\la} \cap P_j| \geq \frac{\tau}{4} \right\}.$$
We have
\begin{align*}
\frac{\tau M}{2} &\leq \sum_{\la \in T_j} |\mathcal A_{\la} \cap P_j|
\\&=\sum_{\la \in T_j'} |\mathcal A_{\la} \cap P_j|+\sum_{\la \in T_j \setminus T_j'} |\mathcal A_{\la} \cap P_j|
\\& \leq \sum_{\la \in T_j'} |\mathcal A_{\la} \cap P_j|+\frac{\tau M}{4},
\end{align*}
and hence $\sum_{\la \in T_j'} |\mathcal A_{\la} \cap P_j| \geq \frac{\tau M}{4}$. Since $P_j \subset \cup_{\la \in T_j} \mathcal A_{\la}$, we have $|\A_{\la}\cap P_j| \leq 2\tau$ for all $\la \in T_j'$. It then follows that $|T_j'| \geq \frac{M}{8}$. Finally, let $T_j''$ be the subset of $T_j'$ consisting of the first (i.e. the shallowest) $\frac{M}{8}$ slopes in $T_j'$. For each $\la \in T_j''$, let $\mathcal A_{\la}''$ be the first (i.e. nearest the origin) $\frac{\tau}{4}$ points from $\mathcal A_{\la} \cap P_j$.\footnote{In order to simplify the increasingly complicated notation, we assume here that the values $\frac{M}{8}$ and $\frac{\tau}{4}$ are integers. Strictly speaking, we should have $|T_j''|=\lceil \frac{M}{8} \rceil$ and $|\mathcal A_{\la}''|=\lceil \frac{\tau}{4} \rceil.$} Define $P_j''$ to be the point set
$$P_j'':=\cup_{ \la \in T_j''} \A_{\la}''.$$
This is a point set of cardinality $\frac{M\tau}{32}$, consisting of $\frac{M}{8}$ lines through the origin, each supporting $\frac{\tau}{4}$ points. We will work with $P_j''$ for the remainder of the proof.

For each $\la \in T_j''$, we choose a representative point $(a_{\la},\la a_{\la}) \in \mathcal A_{\la}''$. This point plays the same role as the fixed point $p_i$ in the earlier sketch proof of \eqref{antal}. We label this set of representatives $\mathcal B_j$. The process of choosing this set of $\frac{M}{8}$ representatives so that is has certain important properties is rather a delicate one, and we will return to this choice later in the proof.

Define $R_j'$ to be the set
$$R_j':=\bigcup_{ (\la,\la') \in U_j \times T_j'' } R \big ( (a_{\la'}, \la' a_{\la'})+\A_{\la} \big ).$$
Since $R_j'\subset R_j$, it will be sufficient to show that $|R_j'|$ is large for all $j$, and then use \eqref{plan} to complete the argument.

Now, let $\la \in U_j$ and $\la' \in T_j''$. An important observation from \cite{BORN} is that the set
\begin{equation}
R((a_{\la'},\la'a_{\la'})+\mathcal A_{\la})
\label{realisation}
\end{equation}
has cardinality $|A_{\la}| \geq \tau$. To see this, label the elements of $A_{\la}$ in increasing order; so $a_1,\dots,a_{|A_{\la}|}$ are the elements of $A_{\la}$ and $a_1<a_2< \dots < a_{|A_{\la}|}$. Then
$$R((a_{\la'},\la' a_{\la'})+(a_1,\la a_1))<R((a_{\la'},\la'a_{\la'})+(a_2,\la a_2))< \dots<R((a_{\la'},\la' a_{\la'})+(a_{|A_{\la}|},\la a_{|A_{\la}|})).$$
A visual illustration of why these inequalities hold is given in the sketch proof of \eqref{antal}.

We would like to repeat this argument for each pair $(\la,\la') \in U_j \times T_j''$ in order to deduce that $|R_j'| \geq \sum_{(\la,\la') \in U_j \times T_j''} \tau \gg MN\tau$. However, it is possible that this is an overcount, since a single slope from $R_j'$ will be counted more than once in this process if it occurs as an element of 	
$$R((a_{\la_3},\la_3a_{\la_3})+\mathcal A_{\la_1}) \cap R((a_{\la_4},\la_4a_{\la_4})+\mathcal A_{\la_2}),$$ for some quadruple $(\la_1,\la_2,\la_3,\la_4) \in U_j \times U_j \times T_j'' \times T_j''$ with $(\la_1,\la_3) \neq (\la_2,\la_4)$.

For this reason, it is necessary to introduce an error term and deal with this overcounting. Some more notation is needed.

For each $z \in R_j'$, let $r(z)$ denote the number of sets of the form \eqref{realisation} that $z$ belongs to. To be precise
$$r(z)=|\{(\la, \la') \in U_j \times T_j'' : z \in R((a_{\la'},\la'a_{\la'})+\mathcal A_{\la})\}|.$$
Note that
$$|R_j'| \geq \sum _{(\la,\la') \in U_j \times T_j''} \tau - \sum_{z \in R_j'} (r(z)-1).$$
For all $z \in R_j'$, the bound $r(z)-1 \leq (r(z)-1)r(z)$ holds trivially. Therefore
\begin{align*}
|R_j'| &\geq \sum _{(\la,\la') \in U_j \times T_j''} \tau - \sum_{z \in R_j'} (r(z)-1)r(z)
\\&=\sum _{(\la,\la') \in U_j \times T_j''} \tau - \sum_{z \in R_j'} |\{(\la_1,\la_2,\la_3,\la_4) \in U_j \times U_j \times T_j'' \times T_j'' :
\\ & z \in  R((a_{\la_3},\la_3a_{\la_3})+\mathcal A_{\la_1}) \cap R((a_{\la_4},\la_4a_{\la_4})+\mathcal A_{\la_2}), (\la_1,\la_3) \neq (\la_2,\la_4) \}|
\end{align*}
A rearrangement of this inequality yields
\begin{align}
|R_j'| &\geq \sum_{(\la,\la') \in U_j \times T_j''} \tau - \sum_{(\la_1,\la_3), (\la_2,\la_4) \in U_j \times T_j'':(\la_1,\la_3) \neq (\la_2,\la_4)} \mathcal E(\la_1,\la_2,\la_3,\la_4)  
\\&= \frac{MN \tau}{8} - \sum_{(\la_1,\la_3), (\la_2,\la_4) \in U_j \times T_j'':(\la_1,\la_3) \neq (\la_2,\la_4)} \mathcal E(\la_1,\la_2,\la_3,\la_4), \label{key}
\end{align}
where
$$\mathcal E (\la_1,\la_2,\la_3,\la_4):=|\{ z \in R((a_{\la_3},\la_3a_{\la_3})+\mathcal A_{\la_1}) \cap R((a_{\la_4},\la_4a_{\la_4})+\mathcal A_{\la_2})\}|. $$

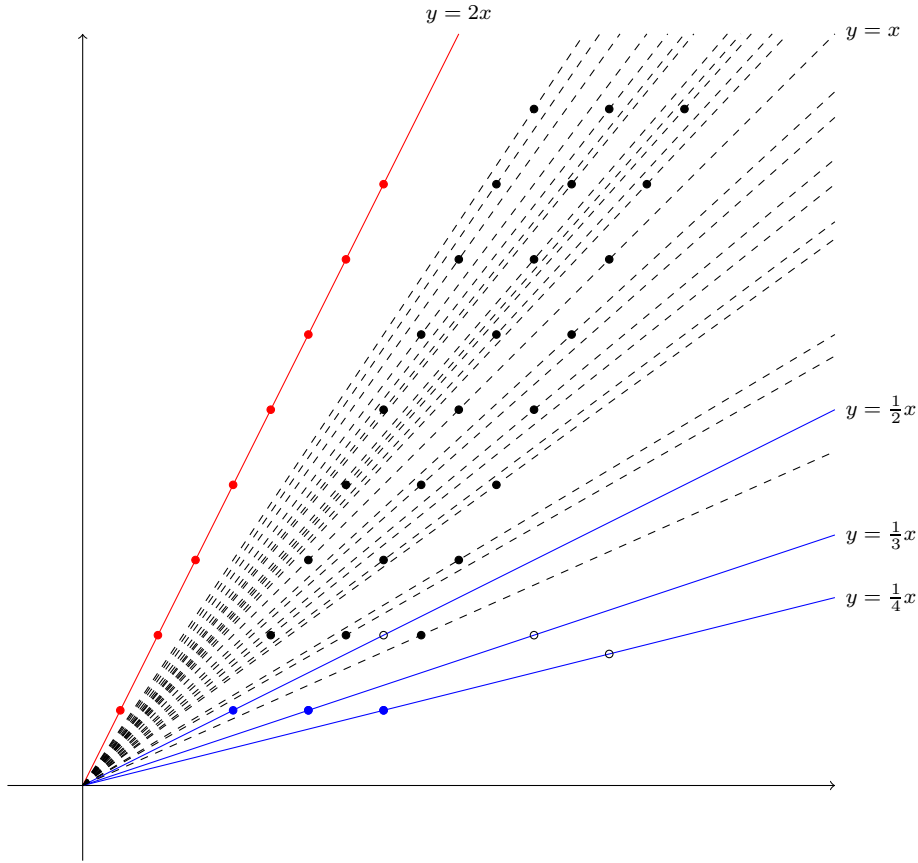
\begin{figure}[H]
\centering
\begin{tikzpicture}[font=\tiny, scale=1]

\draw [->] (0,-1) -- (0,10);
\draw [->] (-1,0) -- (10,0);
\draw [blue] (0,0) -- (10,2.5);
\draw [blue] (0,0) -- (10,3.333333);
\draw [blue] (0,0) -- (10,5);
\draw [red] (0,0) -- (5,10);
\draw [dashed] (0,0) -- (7.142857,10);
\draw [dashed] (0,0) -- (8,10);
\draw [dashed] (0,0) -- (8.571248,10);
\draw [dashed] (0,0) -- (10,10);
\draw [dashed] (0,0) -- (10,8.333333);
\draw [dashed] (0,0) -- (10,7.5);
\draw [dashed] (0,0) -- (10,6);
\draw [dashed] (0,0) -- (6.666666,10);
\draw [dashed] (0,0) -- (7.77777,10);
\draw [dashed] (0,0) -- (8.888888,10);
\draw [dashed] (0,0) -- (10,8);
\draw [dashed] (0,0) -- (10,5.714285);
\draw [dashed] (0,0) -- (10,4.444444);
\draw [dashed] (0,0) -- (8.75,10);
\draw [dashed] (0,0) -- (10,8.888888);
\draw [dashed] (0,0) -- (10,7.27272727);
\draw [dashed] (0,0) -- (7.5,10);
\draw [dashed] (0,0) -- (9.1666666,10);
\draw [dashed] (0,0) -- (10,9.230769);
\draw [dashed] (0,0) -- (6.875,10);
\draw [dashed] (0,0) -- (8.125,10);
\draw [dashed] (0,0) -- (9.375,10);
\draw[fill,blue] (2,1) circle [radius=0.05];
\draw[fill,blue] (3,1) circle [radius=0.05];
\draw[fill,blue] (4,1) circle [radius=0.05];
\draw (4,2) circle [radius=0.05];
\draw (6,2) circle [radius=0.05];
\draw (7,1.75) circle [radius=0.05];
\draw[fill,blue] (3,1) circle [radius=0.05];
\draw[fill,blue] (4,1) circle [radius=0.05];
\draw[fill,red] (1,2) circle [radius=0.05];
\draw[fill,red] (2,4) circle [radius=0.05];
\draw[fill,red] (3,6) circle [radius=0.05];
\draw[fill,red] (4,8) circle [radius=0.05];
\draw[fill,red] (1.5,3) circle [radius=0.05];
\draw[fill,red] (2.5,5) circle [radius=0.05];
\draw[fill,red] (3.5,7) circle [radius=0.05];
\draw[fill,red] (0.5,1) circle [radius=0.05];
\draw[fill] (2.5,2) circle [radius=0.05];
\draw[fill] (3.5,2) circle [radius=0.05];
\draw[fill] (4.5,2) circle [radius=0.05];
\draw[fill] (3.5,4) circle [radius=0.05];
\draw[fill] (4.5,4) circle [radius=0.05];
\draw[fill] (5.5,4) circle [radius=0.05];
\draw[fill] (4.5,6) circle [radius=0.05];
\draw[fill] (5.5,6) circle [radius=0.05];
\draw[fill] (6.5,6) circle [radius=0.05];
\draw[fill] (5.5,8) circle [radius=0.05];
\draw[fill] (6.5,8) circle [radius=0.05];
\draw[fill] (7.5,8) circle [radius=0.05];
\draw[fill] (3,3) circle [radius=0.05];
\draw[fill] (4,3) circle [radius=0.05];
\draw[fill] (5,3) circle [radius=0.05];
\draw[fill] (4,5) circle [radius=0.05];
\draw[fill] (5,5) circle [radius=0.05];
\draw[fill] (6,5) circle [radius=0.05];
\draw[fill] (5,7) circle [radius=0.05];
\draw[fill] (6,7) circle [radius=0.05];
\draw[fill] (7,7) circle [radius=0.05];
\draw[fill] (6,9) circle [radius=0.05];
\draw[fill] (7,9) circle [radius=0.05];
\draw[fill] (8,9) circle [radius=0.05];
\node[right] at (10,2.5) {$y=\frac{1}{4}x$};
\node[right] at (10,3.33333333) {$y=\frac{1}{3}x$};
\node[right] at (10,5) {$y=\frac{1}{2}x$};
\node[above] at (5,10) {$y=2x$};
\node[right] at (10,10) {$y=x$};
\end{tikzpicture}

\caption{An illustration of the geometric setup for the proof. In this representation $\tau=8$, $M=24$ and $N=1$; the three blue lines correspond to elements of $T_j''$ (recall that $|T_j''|=M/8=3$) and the single red line corresponds to the sole element of $U_j$. The black points are all elements of the sum set of the red and blue points. The dotted lines, which form a minimal covering of the black points by lines through the origin, are in one-to-one correspondence with the elements of $R_j'$. In this case, $|R_j'|=22$, which is smaller than its maximum possible value of $24$. This loss occurs because the dotted line with equation $y=x$ has three points on it (in the previously introduced notation, this means that $r(1)=3$). An error term is introduced to compensate for this overcounting of rich dotted lines. One could obtain a different version of this picture by making a different choice for the three representative (blue) points on the blue lines and considerable care is taken throughout the proof to make a suitable choice for the representative points in order to have as few rich dotted lines as possible.}

\centering

\end{figure}

Note that the value $\mathcal E (\la_1,\la_2,\la_3,\la_4)$ is dependent upon our earlier choice for the set of representatives $\mathcal B_j$. The following lemma tells us that it is possible to construct this set $\mathcal B_j$ in such a way as to ensure that $\mathcal E (\la_1,\la_2,\la_3,\la_4)$ is always small for $\la_3 \neq \la_4$.

\begin{lemma} \label{thm:hardwork} The set of representatives $\mathcal B_j$ can be chosen in such a way as to ensure that, for all $(\la_1,\la_3), (\la_2,\la_4) \in U_j \times T_j''$ such that  $\la_3 \neq \la_4$, we have
\begin{equation}
\mathcal E (\la_1,\la_2,\la_3,\la_4) \leq \tau^{19/25}.
\label{Ebound}
\end{equation}
\end{lemma}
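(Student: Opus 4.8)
The plan is, for each relevant quadruple, to reinterpret $\mathcal E(\la_1,\la_2,\la_3,\la_4)$ as the number of curves of a certain family passing through a single point of the plane which encodes the pair of representatives chosen on the lines $\la_3$ and $\la_4$. A rich-points estimate deduced from Lemma~\ref{thm:PS} will then show that only very few such points are heavily incident, and Lemma~\ref{EGT} will be used to make one choice of representative on every line of $T_j''$ that avoids all the heavily incident points simultaneously.

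To set up the incidence problem, fix $\la_3\neq\la_4\in T_j''$ and $\la_1,\la_2\in U_j$, and work in the plane with coordinates $(s,t)$, where $s$ is to be thought of as the horizontal coordinate of a point on the line of slope $\la_3$ and $t$ as that of a point on the line of slope $\la_4$. For each $(x,y)\in A_{\la_1}\times A_{\la_2}$ define the curve
$$C_{x,y}:=\left\{(s,t)\in\R^2:(\la_3 s+\la_1 x)(t+y)=(\la_4 t+\la_2 y)(s+x)\right\},$$
and put $\mathcal L:=\{C_{x,y}:(x,y)\in A_{\la_1}\times A_{\la_2}\}$ and $\mathcal P:=A_{\la_3}''\times A_{\la_4}''$. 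Since $T_j''$ and $U_j$ are disjoint and $\la_3\neq\la_4$, each $C_{x,y}$ is a genuine hyperbola and the map $(x,y)\mapsto C_{x,y}$ is injective, so $|\mathcal L|=|A_{\la_1}||A_{\la_2}|\leq 4\tau^2$ while $|\mathcal P|=\tau^2/16$. Clearing denominators in the equation $R((s,\la_3 s)+(x,\la_1 x))=R((t,\la_4 t)+(y,\la_2 y))$ shows that $(s,t)\in C_{x,y}$ exactly when these two slopes coincide; combining this with the observation recalled around \eqref{realisation} that the slopes occurring in $R((s,\la_3 s)+\mathcal A_{\la_1})$ are pairwise distinct (and likewise for $\la_2,\la_4$), it follows that for any representatives $(a_{\la_3},\la_3 a_{\la_3})\in\mathcal A_{\la_3}''$ and $(a_{\la_4},\la_4 a_{\la_4})\in\mathcal A_{\la_4}''$,
$$\mathcal E(\la_1,\la_2,\la_3,\la_4)=\big|\{C\in\mathcal L:(a_{\la_3},a_{\la_4})\in C\}\big|.$$
A calculation analogous to the one relegated to the appendix (for the curves $l_{(a,\la a)}$) confirms that $\mathcal L$ and $\mathcal P$ satisfy the two hypotheses of Lemma~\ref{thm:PS}. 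Applying the point-curve dual of that lemma, that is, the Szemer\'edi--Trotter-type bound of Pach and Sharir \cite{PS} in its rich-points form, the number of points $(s,t)\in\mathcal P$ lying on at least $\tau^{19/25}$ of the curves in $\mathcal L$ is
$$\ll\frac{|\mathcal L|^2}{(\tau^{19/25})^3}+\frac{|\mathcal L|}{\tau^{19/25}}\ll\frac{\tau^4}{\tau^{57/25}}+\frac{\tau^2}{\tau^{19/25}}\ll\tau^{43/25}.$$

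Call a pair $(s,t)\in A_{\la_3}''\times A_{\la_4}''$ \emph{bad for $\{\la_3,\la_4\}$} if for some $(\la_1,\la_2)\in U_j\times U_j$ it lies on at least $\tau^{19/25}$ of the curves $C_{x,y}$ with $(x,y)\in A_{\la_1}\times A_{\la_2}$. Summing the preceding estimate over the at most $N^2$ choices of $(\la_1,\la_2)$ shows that the number of bad pairs for a fixed $\{\la_3,\la_4\}$ is at most $C'N^2\tau^{43/25}$, for an absolute constant $C'$. Now build a graph $G$ with independent sets $V_\la:=A_\la''$ ($\la\in T_j''$), each of size $k:=\tau/4$, on $r:=|T_j''|=M/8$ vertex classes, joining $s\in V_{\la_3}$ to $t\in V_{\la_4}$ (for $\la_3\neq\la_4$) precisely when $(s,t)$ is not bad for $\{\la_3,\la_4\}$; then $|E(V_{\la_3},V_{\la_4})|\geq k^2-C'N^2\tau^{43/25}$ for every pair $\la_3\neq\la_4$. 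Since $M=\lfloor c_M\tau^{1/5}\rfloor$ and $N=\lfloor c_N\tau^{1/25}\rfloor$ we have $C'N^2\tau^{43/25}\leq C'c_N^2\tau^{9/5}$, whereas $k^2/(e(2r-3))\geq\tau^2/(4eM)\geq\tau^{9/5}/(4ec_M)$; hence, provided $c_M$ and $c_N$ are chosen so that $c_N^2 c_M$ is small enough to force $4eC'c_N^2 c_M<1$ (a further smallness requirement, to be verified at the end of the proof alongside \eqref{req1} and \eqref{req2}), condition \eqref{condition} of Lemma~\ref{EGT} holds for $G$. Lemma~\ref{EGT} then produces an induced copy of $K_r$ in $G$, that is, a choice of one vertex $a_\la\in A_\la''$ for each $\la\in T_j''$ with no pair $\{a_{\la_3},a_{\la_4}\}$ bad. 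Setting $\mathcal B_j:=\{(a_\la,\la a_\la):\la\in T_j''\}$ and unravelling the definition of ``bad'' yields $\mathcal E(\la_1,\la_2,\la_3,\la_4)<\tau^{19/25}$ for all $(\la_1,\la_3),(\la_2,\la_4)\in U_j\times T_j''$ with $\la_3\neq\la_4$, which is the desired bound.

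The main obstacle is obtaining the \emph{refined} rich-points estimate $\tau^{43/25}$: the bound one gets by feeding the Pach--Sharir incidence inequality in directly, namely $\ll|\mathcal P|^{2/3}|\mathcal L|^{2/3}/\tau^{19/25}$, is of size $\tau^{143/75}>\tau^{43/25}$, and does not survive the multiplication by $N^2$ in the count of bad pairs. This is exactly why the small auxiliary family $U_j$ of $N$ slopes is split off at the very start, and it is what dictates the exponents $1/5$ and $1/25$ in the definitions of $M$ and $N$. The remaining work is the routine but lengthy check that the hyperbolas $C_{x,y}$ obey the incidence axioms of Lemma~\ref{thm:PS}, which runs parallel to the appendix computation already performed for the curves $l_{(a,\la a)}$.
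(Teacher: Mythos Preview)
Your proof is correct and follows essentially the same strategy as the paper: set up an incidence problem governed by the equation $(\la_3 s+\la_1 x)(t+y)=(\la_4 t+\la_2 y)(s+x)$, apply the Pach--Sharir bound to show that at most $C'\tau^{43/25}$ pairs in $A_{\la_3}''\times A_{\la_4}''$ are bad for a fixed $(\la_1,\la_2)$, sum over the $N^2$ choices of $(\la_1,\la_2)$, and then invoke Lemma~\ref{EGT} on the resulting $r$-partite graph to extract a clique of representatives. The only difference is a harmless duality: the paper indexes \emph{curves} by $(a,b)\in A_{\la_3}\times A_{\la_4}$ and \emph{points} by $(x,y)\in A_{\la_1}\times A_{\la_2}$, then bounds rich curves via Lemma~\ref{thm:PS} directly, whereas you swap these roles and bound rich points, which requires the dual (rich-points) form of Lemma~\ref{thm:PS} rather than the rich-curves form actually stated there---a standard consequence given the symmetric hypotheses, but not literally the lemma as written.
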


\begin{proof} Fix two distinct slopes $\la_3$ and $\la_4$ in $T_j''$ and let $(a,\la_3 a) \in \mathcal A_{\la_3}$ and $(b,\la_4 b) \in \mathcal A_{\la_4}$. Fix $(\la_1,\la_2) \in U_j \times U_j$. We say that the pair $((a,\la_3 a),(b,\la_4 b))$ is \textit{good for $(\la_1,\la_2)$} if
$$|\{(x,y) \in A_{\la_1} \times A_{\la_2}: R((a,\la_3 a)+(x,\la_1 x))=R((b,\la_2 b)+(y,\la_4 y))\}| \leq \tau^{19/25}.$$
Note that this condition is equivalent to the bound
$$|\{(x,y) \in A_{\la_1} \times A_{\la_2}: (\la_3 a+\la_1 x)(b+y)=(\la_4 b+\la_2 y)(a+x)\}| \leq \tau^{19/25}.$$
We will use incidence theory to prove that most elements of $\mathcal A_{\la_3} \times \mathcal A_{\la_4}$ are good for $(\la_1,\la_2)$.

For $a \in A_{\la_3}$ and $b \in A_{\la_4}$, define $l_{a,b}$ to be the curve
$$l_{a,b}:=\{(x,y): (\la_3 a+\la_1 x)(b+y)=(\la_4 b+\la_2 y)(a+x)\}.$$
Define $\mathcal L'$ to be the set of curves
$$\mathcal L':=\{l_{a,b}: (a,b) \in A_{\la_3} \times A_{\la_4}\}$$
and let $\mathcal P$ be the point set $\mathcal P:=A_{\la_1} \times A_{\la_2}$. We can check that the family of curves $\mathcal L'$ satisfy the conditions of Lemma \ref{thm:PS}. Once again, this calculation is delayed until the appendix. Therefore, if we define $\mathcal L_k'$ to be the set of curves from $\mathcal L'$ which contain at least $k$ points from $\mathcal P$, 
we have
$$|\mathcal L_k'| \ll \frac{|\mathcal P|^2}{k^3}+\frac{|\mathcal P|}{k}.$$
Setting the parameter $k=\tau^{19/25}$ and recalling that $|A_{\la_1}|, |A_{\la_2}| \leq 2\tau$, we then have
$$|\mathcal L_k'| \ll \frac{\tau^4}{\tau^{57/25}}+\frac{\tau^2}{\tau^{19/25}}\ll \tau^{43/25}.$$
Therefore, for some absolute constant $C'$, there exist at most $C'\tau^{43/25}$ pairs $(a,b) \in A_{\la_3} \times A_{\la_4}$ such that
$$|\{(x,y) \in A_{\la_1} \times A_{\la_2}: (\la_3 a+\la_1 x)(b+y)=(\la_4 b+\la_2 y)(a+x)\}| \geq \tau^{19/25}.$$
By definition, there exist at most $C'\tau^{43/25}$ pairs $((a,\la_3 a),(b,\la_4 b)) \in \mathcal A_{\la_3} \times \mathcal A_{\la_4}$ which are not good for $(\la_1,\la_2)$.

We say that a pair  $((a,\la_3 a),(b,\la_4 b)) \in \mathcal A_{\la_3} \times \mathcal A_{\la_4}$ is \textit{good} if it is good for all $(\la_1,\la_2) \in U_j \times U_j$. Repeating the previous incidence theoretic analysis for each pair $(\la_1,\la_2)$, we see that there are at most $C'N^2\tau^{43/25}$ pairs $((a,\la_3 a),(b,\la_4 b)) \in \mathcal A_{\la_3} \times \mathcal A_{\la_4}$ which are not good for some $(\la_1,\la_2)$. The other $(\frac{\tau}{4})^2 - C'N^2 \tau^{43/25}$ pairs are good.

We choose the constants $c_M$ and $c_N$ sufficiently small so as to ensure that
\begin{equation}
\left(\frac{\tau}{4}\right)^2 - C'N^2 \tau^{43/25}>\left(\frac{\tau}{4}\right)^2\left(1-\frac{1}{e(2\frac{M}{8}-3)}\right).
\label{needed}
\end{equation}
After some algebra, we see that the bound \eqref{needed} will certainly follow if
\begin{equation}
4eC'c_Mc_N^2 \leq 1
\label{req3}
\end{equation}
holds. We will verify later that our choices of $c_M$ and $c_N$ can be made in such a way as to ensure that \eqref{req3} holds.

Construct an $\frac{M}{8}$-partite graph $G$ with $\frac{M}{8}\cdot \frac{\tau}{4}$ vertices. The vertices of $G$ are the $\frac{M\tau}{32}$ elements of $P_j''$. The $M/8$ independent sets consist of the elements of $P_j''$ lying on the same line through the origin. That is, if $(a,\la a)$ and $(b,\la b)$ are elements of $\mathcal A_{\la}''$, for some $\la \in T_j''$, then we do not draw an edge connecting $(a, \la a)$ and $(b, \la b)$. So, as in the statement of Lemma \ref{EGT}, we have an $\frac{M}{8}$-partite graph, and each of the $M/8$ independent sets contains $\frac{\tau}{4}$ vertices.

The edge set for our graph $G$ is simply constructed as follows. We draw an edge between two vertices $(a, \la_3 a)$ and $(b, \la_4 b)$ from our graph if the pair $((a,\la_3 a),(b,\la_4 b))$ is good.

We can now apply Lemma \ref{EGT} with $k=\frac{\tau}{4}$ and $r=\frac{M}{8}$. It follows from \eqref{needed} that there are at more than 
$$\left(\frac{\tau}{4}\right)^2\left(1-\frac{1}{e(2\frac{M}{8}-3)}\right)$$
edges between any two of the $M/8$ independent sets. Therefore, by Lemma \ref{EGT}, we can find a copy of $K_{\frac{M}{8}}$ as an induced subgraph of $G$. Define the representative set $\mathcal B_j$ to be the set of $M/8$ vertices which make up the induced complete graph.

To complete the proof of the lemma, we just need to verify that this choice for the set $\mathcal B_j$ has the required property. Indeed, fix a quadruple $(\la_1,\la_2,\la_3,\la_4) \in U_j \times U_j \times T_j'' \times T_j''$ such that $\la_3 \neq \la_4$. Then, recall that
\begin{align*}
\mathcal E (\la_1,\la_2,\la_3,\la_4):&=|\{ r \in R((a_{\la_3},\la_3a_{\la_3})+\mathcal A_{\la_1}) \cap R((a_{\la_4},\la_4a_{\la_4})+\mathcal A_{\la_2})\}|
\\&=|\{(x,y) \in A_{\la_1} \times A_{\la_2}: (\la_3 a_{\la_3}+\la_1 x)(a_{\la_4}+y)=(\la_4 a_{\la_4}+\la_2 y)(a_{\la_3}+x)\}|.
\end{align*}
However, since the vertices $(a_{\la_3},\la_3a_{\la_3})$ and $(a_{\la_4},\la_4a_{\la_4})$ are neighbours in our graph $G$ (since they are both elements of a complete induced subgraph), it follows that they form a good pair, which in turn means that, for all $(\la_1,\la_2) \in U_j \times U_j$
$$|\{(x,y) \in A_{\la_1} \times A_{\la_2}: (\la_3 a_{\la_3}+\la_1 x)(a_{\la_4}+y)=(\la_4 a_{\la_4}+\la_2 y)(a_{\la_3}+x)\}| \leq \tau^{19/25}.$$
This completes the proof of the lemma.
\end{proof}
We also need to deal with the error term $\mathcal E (\la_1,\la_2,\la_3,\la_4)$ in the case when $\la_3=\la_4$, in which case the condition that $(\la_1,\la_3) \neq (\la_2,\la_4)$ ensures that $\la_1 \neq \la_2$. In this case we have
\begin{align}
& \mathcal E (\la_1,\la_2,\la_3,\la_3)\\&=|\{ (x,y) \in A_{\la_1}\times A_{\la_2}: R((a_{\la_3},\la_3a_{\la_3})+(x,\la_1x)) = R((a_{\la_3},\la_3a_{\la_3})+(y,\la_2 y))\}|.
\\&\leq \tau^{1-\frac{1}{25}}. \label{1eq3}
\end{align}
The inequality here follows from the fact that the representative $(a_{\la_3},\la_3 a_{\la_3})$ is chosen from the set $P_j'' \subset P_j$, and the bound \eqref{wanted} holds for all $(a,\la a) \in P_j$, provided that $\la_1 \neq \la_2$.

We will now show that Lemma \ref{thm:hardwork} and \eqref{1eq3} can be used to prove Theorem \ref{thm:main}. Plugging the bound \eqref{Ebound} and \eqref{1eq3} into \eqref{key} yields
\begin{equation}
|R_j| \geq \frac{MN\tau}{8} - M^2N^2\tau^{19/25} - MN^2\tau^{1-\frac{1}{25}}.
\label{Mbound}
\end{equation}

First of all, we choose $N$ in such a way as to ensure that the third term here is small. By choosing $c_N \leq \frac{1}{16}$, we ensure that $N \leq \frac{\tau^{1/25}}{16}$ and hence
$$MN^2\tau^{1-\frac{1}{25}} \leq \frac{MN\tau}{16}.$$
Combining this with \eqref{Mbound}, we have
\begin{equation}
|R_j| \geq \frac{MN\tau}{16} - M^2N^2\tau^{19/25}.
\label{Mbound2}
\end{equation}
We now choose $M$ in such a way as to ensure that the second term here is small. By choosing $c_M \leq \frac{1}{2}$, we ensure that $M \leq \frac{\tau^{1/5}}{2}$. We then have
$$MN \leq M\frac{\tau^{1/25}}{16} \leq \frac{\tau^{6/25}}{32},$$
from which it follows that $M^2N^2 \tau^{19/25} \leq \frac{MN\tau}{32}$, and indeed
\begin{equation}
|R_j| \geq \frac{MN\tau}{32}.
\label{Mbound3}
\end{equation}





Plugging this bound into \eqref{plan} and then applying \eqref{Sbound}, we obtain
\begin{align*}
\left| \frac{A+A}{A+A} \right| &\geq \left\lfloor \frac{|S|}{M+N} \right\rfloor \left(\frac{MN \tau}{32} \right)
\\& \geq  \left\lfloor \frac{|S|}{2M} \right\rfloor \left(\frac{MN \tau}{32} \right)
\\&\gg |S|\tau N
\\&\gg \frac{|A|^2}{\log|A|} N
\\&\gg \frac{|A|^2}{\log |A|} \tau^{1/25}.
\end{align*} 
Note that the second, third and fifth inequalities here make use of the assumption that \eqref{req1} holds. Then, after inserting the bound $\tau \gg \frac{|A|^2}{|A:A|}$, we have
$$\left| \frac{A+A}{A+A} \right| \gg \frac{|A|^{2+\frac{2}{25}}}{|A:A|^{\frac{1}{25}}\log |A|},$$
as required. 

It remains to verify that it is possible to choose the constants $c_M$ and $c_N$ in a suitable way so that these values satisfy all of the necessary conditions assumed during the argument. For the convenience of the reader, we recap these conditions below.
\begin{align}
\label{1}c_M &\leq \frac{1}{2},\\
\label{2}c_N &\leq \frac{1}{16}, \\
\label{3}4eC'c_Mc_N^2 &\leq 1, \\
\label{4} c_N &\leq \left( \frac{c_M}{4C}\right)^{1/2}, \\
\label{5} c_M\tau^{1/5} &\geq c_N \tau^{1/25}, \\
\label{7} c_N \tau^{1/25} &\geq 1, \\
\label{8} c_M\tau^{1/5} &\leq \frac{|S|}{2}.
\end{align}

Set $c_M=\min \{ \frac{1}{2}, \frac{1}{4eC'} \}$ and $c_N=\min \{ \frac{1}{16}, \left(\frac{c_M}{4C}\right)^{1/2}, c_M \}$. This ensures that \eqref{1}, \eqref{2}, \eqref{3}, \eqref{4} and \eqref{5} hold. So, it suffices to check that \eqref{7} and \eqref{8} hold.

If \eqref{7} does not hold then $\tau^{1/25}\ll 1$. It then follows from \eqref{Sbound} that $|A:A|\geq |S| \gg \frac{|A|^2}{\log |A|}$. Applying \eqref{antal}, we have
$$\left|\frac{A+A}{A+A}\right| \geq |A|^2 =\frac{|A|^{2+\frac{2}{25}}}{|A|^{\frac{2}{25}}} \gg \frac{|A|^{2+\frac{2}{25}}}{|A:A|^{\frac{1}{25}}\log^{\frac{1}{25}}|A|}.$$
In particular, the theorem holds, and so we may assume that \eqref{7} holds.

Finally, if $c_M \tau^{1/5}>\frac{|S|}{2}$ then it follows from bound \eqref{Sbound} that $\tau^{1/5} \gg \frac{|A|^2}{(\tau \log|A|)}$. However, we also have $\tau \leq |A|$, and thus $\log |A| \gg |A|^{4/5}$. This is a contradiction for sufficiently large sets $A$. \qedsymbol

\section*{Acknowledgements}The author was partially supported by the Austrian Science Fund (FWF): Project F5511-N26, which is part of the Special Research Program ``Quasi-Monte Carlo Methods: Theory and Applications". I am grateful to Abdul Basit, Herbert Fleischner, Brandon Hanson, Ben Lund, Melvyn Newton, Orit Raz, Ilya Shkredov, Arne Winterhof and Dmitry Zhelezov for various helpful conversations and corrections.

I am especially grateful to Noga Alon for his generous help in providing the proof of Lemma \ref{EGT}, which resulted in an improvement to the main result of this paper.

\appendix

\section*{Appendix: Proof that $\mathcal L$ satisfies the conditions of Lemma \ref{thm:PS}}

Firstly, we will check that two distinct curves from $\mathcal L$ intersect in at most two points.

Let $l_{(a,\la a)}$ and $l_{(b,\la' b)}$ be two distinct curves from $\mathcal L$. Their intersection is the set of all $(x,y)$ such that
\begin{align}
\la ay+\la_1 x(a+y) &=\la ax +\la_2 y(a+x) \label{boring1}
\\ \la' by+\la_1 x(b+y) &=\la' bx +\la_2 y(b+x). \label{boring2}
\end{align}
This can be rearranged into the form
\begin{align}
x \big \{ y(\la_1-\la_2) +a(\la_1-\la) \big \}&=ay(\la_2-\la) \label{boring7}
\\x \big \{ y(\la_1-\la_2) +b(\la_1-\la') \big \}&=by(\la_2-\la'). \label{boring8}
\end{align}
Let us assume that $ y(\la_1- \la_2)+a(\la_1-\la)$ and $ y(\la_1- \la_2)+b(\la_1-\la')$ are non-zero. We will deal with the case when one or more of these quantities equals zero later. With this assumption, we can conclude that
\begin{equation}
\frac{by(\la_2-\la')}{y(\la_1-\la_2) + b(\la_1-\la')}=x=\frac{ay(\la_2-\la)}{y(\la_1-\la_2) + a(\la_1-\la)}.
\label{xeq}
\end{equation}
This gives the following quadratic equation with $y$ as its variable:
$$y^2(a(\la_1-\la_2)(\la_2-\la)-b(\la_1-\la_2)(\la_2-\la')) + y ( ab (\la_1-\la')(\la_2-\la) -ab(\la_1-\la)(\la_2-\la'))=0,$$
which can be rewritten as
\begin{equation}
(\la_1-\la_2)y \big \{ y(a(\la_2-\la)-b(\la_2 - \la'))+ab(\la'-\la) \big \}=0.
\label{quad1}
\end{equation}
Recall from the definition of $\mathcal L$ that $\la_1 \neq \la_2$. Therefore, the solutions to \eqref{quad1} are $y=0$ and
\begin{equation}
y(a(\la_2-\la)-b(\la_2 - \la'))+ab(\la'-\la)=0.
\label{boring5}
\end{equation}
The latter equation gives  unique solution for $y$, unless both
\begin{align}
ab(\la'-\la)&=0 \label{boring3} \,\,\,\,\,\,\ \text{and}
\\(a(\la_2-\la)-b(\la_2 - \la'))&=0 \label{boring4}
\end{align}
hold. However, \eqref{boring3} implies that $\la= \la'$, and after feeding this information into \eqref{boring4}, it follows that $a=b$. This contradicts the assumption that the curves $l_{(a,\la a)}$ and $l_{(b,\la' b)}$ are distinct, and therefore there is at most one solution to \eqref{boring5}. There are then at most two values of $y$ solving \eqref{quad1}. One can feed each of these into \eqref{xeq} to determine the corresponding $x$-coordinates of the two intersection points.

It remains to consider the case when either $ y(\la_1- \la_2)+a(\la_1-\la)=0$ or $ y(\la_1- \la_2)+b(\la_1-\la')=0$. We will check only the first of these possibilities; the second can be verified by a symmetric argument.

Indeed, suppose that $ y(\la_1- \la_2)+a(\la_1-\la)=0$. Since $\la_1 \neq \la_2$, we have $y=\frac{a(\la-\la_1)}{\la_1-\la_2}$. We need to check that there is no point of intersection of the two curves $l_{(a,\la a)}$ and $l_{(b, \la b)}$ with $y$ coordinate equal to $\frac{a(\la-\la_1)}{\la_1-\la_2}$. This is true, because the curve $l_{(a,\la a)}$ does not meet the line $y=\frac{a(\la-\la_1)}{\la_1-\la_2}$, as the following calculation shows.

Recalling \eqref{boring7} and our assumption that $ y(\la_1- \la_2)+a(\la_1-\la)=0$, we see that a point of intersection of the curve $l_{(a,\la a)}$ and the line $y=\frac{a(\la-\la_1)}{\la_1-\la_2}$ is only possible if
$$a^2\frac{\la-\la_1}{\la_1-\la_2}(\la_2-\la)=0,$$
which is a contradiction.


The next task is to check that, given any two points from $\mathcal P$, there exist at most two curves which pass through both.

This is straightforward in this case. All of the curves intersect at the origin. The previous argument shows that they intersect in at most one other point. Since the origin is not an element of $\mathcal P$ (this is because of the original assumption that $A$ consists only of strictly positive elements), it follows that any two curves from $\mathcal L$ contain at most one point from $\mathcal P$ in their intersection. It then follows immediately that there is at most one curve from $\mathcal L$ which passes through two points $p,q \in \mathcal P$. This is because, if there were two distinct curves passing through $p$ and $q$, then their intersection would contain at least two points from $\mathcal P$, which is impossible. \qedsymbol

\section*{Appendix: Proof that $\mathcal L'$ satisfies the conditions of Lemma \ref{thm:PS}}

Firstly, we will check that two distinct curves from $\mathcal L'$ intersect in at most two points.

Let $l_{(a,b)}$ and $l_{(a',b')}$ be two distinct curves from $\mathcal L'$. Their intersection is the set of all $(x,y)$ such that
\begin{align}
(\la_3 a+\la_1x) (b+y) &=(\la_4 b +\la_2 y )(a+x) \label{boring11}
\\ (\la_3 a'+\la_1x) (b'+y) &=(\la_4 b' +\la_2 y )(a'+x). \label{boring12}
\end{align}
This can be rearranged into the form
\begin{align}
x \big \{ y(\la_1-\la_2) +b(\la_1-\la_4) \big \}&=a\big \{y(\la_2-\la_3)+b(\la_4-\la_3) \big \} \label{boring17}
\\x \big \{ y(\la_1-\la_2) +b'(\la_1-\la_4) \big \}&=a'\big \{y(\la_2-\la_3)+b'(\la_4-\la_3) \big \}. \label{boring18}
\end{align}
Let us assume that $ y(\la_1- \la_2)+b(\la_1-\la_4)$ and $ y(\la_1- \la_2)+b'(\la_1-\la_4)$ are non-zero. We will deal with the case when one or more of these quantities equals zero later. With this assumption, we can conclude that
\begin{equation}
\frac{a' \big \{y(\la_2-\la_3)+b'(\la_4-\la_3) \big \}}{y(\la_1-\la_2) + b'(\la_1-\la_4)}=x=\frac{a\big \{y(\la_2-\la_3)+b(\la_4-\la_3) \big \}}{y(\la_1-\la_2) + b(\la_1-\la_4)}.
\label{xeq2}
\end{equation}
This gives the following quadratic equation with $y$ as its variable
\begin{align*}
&y^2\big \{a(\la_1-\la_2)(\la_2-\la_3)-a'(\la_1-\la_2)(\la_2-\la_3) \big \} 
\\ + & y \big \{ ab' (\la_1-\la_4)(\la_2-\la_3) +ab(\la_1-\la_2)(\la_4-\la_3))-a'b(\la_2-\la_3)(\la_1-\la_4)-a'b'(\la_4-\la_3)(\la_1-\la_2) \big \}
\\ + & \big \{ abb'(\la_4-\la_3)(\la_1-\la_4) - a'bb'(\la_1-\la_4)(\la_4-\la_3) \big \}=0.
\end{align*}
There are at most two values of $y$ which give a solution to this quadratic, provided that not all of the coefficients are zero. 

Suppose that we are in this degenerate case. Recall that it follows from the definition of $\mathcal L'$ that $\la_3 \neq \la_4$. Then, since the constant coefficient of the above quadratic is zero, we have
$$abb'(\la_4-\la_3)(\la_1-\la_4) - a'bb'(\la_1-\la_4)(\la_4-\la_3)=0,$$
and it therefore follows that $a=a'$. Combining this information with our assumption that the linear coefficient is zero, it follows from some simple algebra that
$$(b'-b)(\la_1-\la_3)(\la_2-\la_4)=0,$$
and therefore $b=b'$. This contradicts our assumption that the curves $l_{a,b}$ and $l_{a',b'}$ are distinct, and thus it follows that there are at most two solutions for $y$ in the aforementioned quadratic equation. One can feed each of these into \eqref{xeq2} to determine the corresponding $x$-coordintes of the two intersection points.

It remains to consider the case when either $ y(\la_1- \la_2)+b(\la_1-\la_4)=0$ or $ y(\la_1- \la_2)+b'(\la_1-\la_4)=0$. We will check only the first of these possibilities; the second can be verified by a symmetric argument.

Indeed, suppose that $y(\la_1-\la_2)+b(\la_1-\la_4)=0$. This implies that $\la_1 \neq \la_2$ and thus we can write
$$y=b\frac{\la_4-\la_1}{\la_1-\la_2}.$$
We need to check that there is no point of intersection of the two curves $l_{a,b}$ and $l_{a',b'}$ with $y$ coordinate equal to $b\frac{\la_4-\la_1}{\la_1-\la_2}$. This is true, because the curve $l_{a,b}$ does not meet the line $y=b\frac{\la_4-\la_1}{\la_1-\la_2}$, as the following calculation shows.

Recalling \eqref{boring17} and our assumption that $ y(\la_1- \la_2)+b(\la_1-\la_4)=0$, we see that a point of intersection of the curve $l_{(a,b)}$ and the line  $y=b\frac{\la_4-\la_1}{\la_1-\la_2}$ is only possible if
$$a\left(b\frac{\la_4-\la_1}{\la_1-\la_2}(\la_2-\la_3)+b(\la_4-\la_3)\right)=0.$$
After some simple algebra, it follows that this happens if and only if $(\la_3-\la_1)(\la_2-\la_4)=0$, which is a contradiction.

The final task is to show that, for any two distinct points $p=(x_0,y_0)$ and $q=(x_1,y_1)$ from $\mathcal P$, there are at most two curves from $\mathcal L$ which pass through both $p$ and $q$.

The number of curves $l_{a,b}$ passing through both $p$ and $q$ is equal to the number of solutions $(a,b)$ to the system of equations
\begin{align*}
(\la_3 a + \la_1 x_0)(b+y_0) &= (\la_4 b + \la_2 y_0)(a+x_0)
\\ (\la_3 a + \la_1 x_1)(b+y_1) &= (\la_4 b + \la_2 y_1)(a+x_1),
\end{align*}
which can be rearranged into the form
\begin{align}
a \big \{ b(\la_3-\la_4) + y_0(\la_3-\la_2) \big \} &= b(x_0(\la_4-\la_1))+x_0y_0(\la_2-\la_1) \label{bor1}
\\ a \big \{ b(\la_3-\la_4) + y_1(\la_3-\la_2) \big \} &= b(x_1(\la_4-\la_1))+x_1y_1(\la_2-\la_1). \label{bor2}
\end{align}
Let us assume that both $b(\la_3-\la_4) + y_0(\la_3-\la_2)$ and $b(\la_3-\la_4) + y_1(\la_3-\la_2)$ are nonzero. We will deal with the case when one or more of these quantities equals zero later. We can then make $a$ the subject of each of these equations, resulting in the following quadratic with $b$ as its variable:
\begin{align*}
&b^2 \big \{ (x_0-x_1)(\la_3-\la_4)(\la_4-\la_1)\big \}
\\+&b \big \{(x_0y_0-x_1y_1)(\la_2-\la_1)(\la_3-\la_4) -(x_1y_0-x_0y_1)(\la_4-\la_1)(\la_3-\la_2) \big \}
\\+&y_0y_1(\la_3-\la_2)(\la_2-\la_1)(x_0-x_1)=0.
\end{align*}
There are, at most, two values of $b$ satisfying this equation. These solutions can then be plugged into \eqref{bor1} in order to find the two solutions $(a,b)$. The only exception occurs if all of the coefficients of the quadratic are zero. Suppose for a contradiction that this is the case. The $b^2$ coefficient being equal to zero implies that $x_0=x_1$. Then, combining this information with the assumption that the linear coefficient is zero, it follows, after some algebraic manipulation, that
$$(y_0-y_1)(\la_2-\la_4)(\la_3-\la_1)=0,$$
and thus $y_0=y_1$, contradicting the assumption that $p$ and $q$ are distinct.

It remains to consider the case when either $b(\la_3-\la_4) + y_0(\la_3-\la_2)=0$ or $ b(\la_3-\la_4) + y_1(\la_3-\la_2)=0$. We will check only the first of these possibilities; the second can be verified by a symmetric argument.

Indeed, suppose that $b(\la_3-\la_4) + y_0(\la_3-\la_2)=0$. Since $\la_3 \neq \la_4$ we can write
\begin{equation}
b=y_0\frac{\la_2-\la_3}{\la_3-\la_4}.
\label{beq}
\end{equation}
We need to check that, for this choice of $b$, there is no $a$ for which $(x_0,y_0) \in l_{a,b}$. Suppose for a contradiction that such an $a$ exists. Then, recalling \eqref{bor1}, we have
$$a \big \{ b(\la_3-\la_4) + y_0(\la_3-\la_2) \big \} = b(x_0(\la_4-\la_1))+x_0y_0(\la_2-\la_1).$$
For such an $a$ to exist as assumed, it must be the case that the right hand side equals zero. Plugging in \eqref{beq}, we obtain
$$x_0y_0\frac{(\la_2-\la_3)(\la_4-\la_1)}{\la_3-\la_4}+x_0y_0(\la_2-\la_1)=0.$$
This implies that
$$(\la_1-\la_3)(\la_4-\la_2)=0,$$
which is a contradiction.


\end{document}